\theoremstyle{plain}
\newtheorem{theorem}{Theorem}
\newtheorem{corollary}[theorem]{Corollary}
\newtheorem{lemma}[theorem]{Lemma}
\theoremstyle{definition}
\newtheorem{remark}[theorem]{Remark}
\newtheorem{example}[theorem]{Example}
\theoremstyle{remark}
\newcommand{\abs}[1]{\lvert#1\rvert}
\newcommand{\norm}[1]{\lVert#1\rVert}
\newcommand{\bigabs}[1]{\bigl\lvert#1\bigr\rvert}
\newcommand{\bignorm}[1]{\bigl\lVert#1\bigr\rVert}
\newcommand{\set}[1]{\bigl\{#1\bigr\}}
\renewcommand{\le}{\leqslant}
\renewcommand{\ge}{\geqslant}
\renewcommand{\mid}{\::\:}
\newcommand{\term}[1]{{\textit{\textbf{#1}}}}
\def\C{\mathcal C}
\def\F{\mathcal F}
\def\D{\mathcal D}
\def\K{\mathcal K}
\def\DC{\mathcal D_\C}
\def\Cepsilon{\mathcal C_{\varepsilon}}
\def\EC{\mathcal E_\C}
\begin{document}
\baselineskip 18pt

\title[A version of Lomonosov's theorem]{A version of Lomonosov's theorem 
for collections of positive operators}

\author[A.~I. Popov]{Alexey~I. Popov}
\author[V.~G. Troitsky]{Vladimir~G. Troitsky}
\address{%Department of Mathematical and Statistical Sciences,
         %University of Alberta, Edmonton, AB, T6G\,2G1. Canada.
         }
\email{apopov@math.ualberta.ca}
\email{vtroitsky@math.ualberta.ca}

%\thanks{}
%\keywords{}
%\subjclass{}

\date{\today. ~Draft.}

\begin{abstract}
  It is known that for every Banach space $X$ and every proper
  $WOT$-closed subalgebra $\mathcal A$ of $L(X)$, if $\mathcal A$
  contains a compact operator then it is not transitive. That is,
  there exist non-zero $x\in X$ and $f\in X^*$ such that $\langle
  f,Tx\rangle=0$ for all $T\in\mathcal A$. In the case of algebras of
  adjoint operators on a dual Banach space, V.~Lomonosov extended this
  as follows: without having a compact operator in the algebra, one
  has $\bigl\lvert\langle f,Tx\rangle\bigr\rvert\le\lVert T_*\rVert_e$
  for all $T\in\mathcal A$. In this paper, we prove a similar
  extension (in case of adjoint operators) of a result of R.~Drnov\v
  sek.  Namely, we prove that if $\mathcal C$ is a collection of
  positive adjoint operators on a Banach lattice $X$ satisfying
  certain conditions, then there exist non-zero $x\in X_+$ and $f\in
  X^*_+$ such that $\langle f,Tx\rangle\le\lVert T_*\rVert_e$ for all
  $T\in\mathcal C$.
\end{abstract}

\maketitle
%\tableofcontents

%\setcounter{section}{-1}
%\section{Introduction}

In this paper we use techniques which were recently developed for
transitive algebras to obtain analogous results for collections of
positive operators on Banach lattices. Let us first briefly describe
these two branches of the Invariant Subspace research.

\subsection*{Transitive algebras} Suppose that $X$ is a Banach space.
A subspace $Z$ of $X$ is said to be invariant under an operator $T\in
L(X)$ if $\{0\}\ne Z\ne X$ and $T(Z)\subseteq Z$. The \emph{Invariant
  Subspace Problem} deals with the question: ``Which operators have
invariant subspaces?''.  Lomonosov proved in~\cite{Lomonosov:73} that
an operator which commutes with a compact operator has an invariant
subspace. There is also an algebraic version of the problem: which
subalgebras of $L(X)$ have no (common) invariant subspaces? Such
subalgebras are called \term{transitive}. The classical Burnside's
theorem asserts that if $X$ is finite-dimensional then $L(X)$ has no
proper transitive subalgebras (clearly, $L(X)$ itself is always
transitive). Using Lomonosov's technique, Burnside's theorem can be
extended to the infinite-dimensional case as follows:

\begin{theorem}[{\cite[Theorem 8.23]{Radjavi:73}}]\label{comp-alg}
  A proper $WOT$-closed subalgebra of $L(X)$ containing a compact
  operator is not transitive.
\end{theorem}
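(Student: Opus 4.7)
I would argue by contradiction, following Lomonosov's fixed-point technique. Assume $\mathcal{A}$ is transitive and pick a nonzero compact $K\in\mathcal{A}$ with $\|K\|\le 1$. Choose $x_0\in X$ with $\|x_0\|>1$ and $\|Kx_0\|>1$ and set $B=\{x\in X:\|x-x_0\|\le 1\}$; then $0\notin B$ and, since $\|Kx\|\ge\|Kx_0\|-\|K\|>0$ for every $x\in B$, also $0\notin\overline{K(B)}$. Compactness of $K$ makes $\overline{K(B)}$ norm-compact.

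Transitivity implies $\overline{\mathcal{A}y}=X$ for every $y\neq 0$: the common kernel of $\mathcal{A}$ is a closed invariant subspace, hence $\{0\}$ (it is not $X$ because $K\neq 0$), so $\overline{\mathcal{A}y}\neq\{0\}$, and being a nonzero closed invariant subspace it equals $X$. Applied to each $y\in\overline{K(B)}$, this gives $T_y\in\mathcal{A}$ with $\|T_yy-x_0\|<1$, and by continuity the inequality persists on an open neighbourhood $U_y$ of $y$. Extracting a finite subcover $U_1,\dots,U_n$ of $\overline{K(B)}$ with witnesses $T_1,\dots,T_n\in\mathcal{A}$, and choosing a continuous partition of unity $\mu_1,\dots,\mu_n$ subordinate to it, I would define $\phi\colon B\to X$ by $\phi(x)=\sum_{i=1}^n\mu_i(Kx)\,T_iKx$. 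Each $\phi(x)$ is a convex combination of vectors $T_iKx\in B$, so $\phi(B)\subseteq B$, and $\phi(B)$ is relatively compact because it lies in the closed convex hull of the compact set $\bigcup_iT_iK(B)$. Schauder's fixed-point theorem supplies $x_*\in B$ with $\phi(x_*)=x_*$, and then $A:=\sum_i\mu_i(Kx_*)T_iK$ is a compact element of $\mathcal{A}$ satisfying $Ax_*=x_*$ with $x_*\neq 0$. By Riesz--Schauder theory, $N:=\ker(A-I)$ is finite-dimensional and nonzero, and $1$ is an isolated point of $\sigma(A)$.

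The main obstacle is the last step, where one must extract a contradiction from $A$. I would first produce a nonzero finite-rank operator in $\mathcal{A}$. The Riesz spectral projection $P$ onto the generalised eigenspace at $1$ has the form $P=f(A)$ for a function $f$ holomorphic on a neighbourhood of $\sigma(A)$ with $f(0)=0$; hence $f$ is a uniform limit on $\sigma(A)$ of polynomials $q_n$ with vanishing constant term, so $q_n(A)\in\mathcal{A}$ and $P\in\mathcal{A}$ by WOT-closedness. A standard argument then shows that a WOT-closed transitive algebra containing a nonzero finite-rank operator must contain every rank-one operator (transitivity lets one move the range of $P$ through $X$, and a dual argument handles the adjoint direction), and therefore equals $L(X)$. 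This contradicts the assumption that $\mathcal{A}$ is proper.
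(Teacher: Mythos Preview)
The paper does not give its own proof of this theorem; it is quoted from \cite{Radjavi:73} as background and is used only as motivation for the later results. So there is no ``paper's proof'' to compare against.

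That said, your argument is a correct rendition of the classical Lomonosov--Hilden approach combined with a Burnside reduction. A few remarks on the points you flagged as delicate. For the Riesz projection step, your reasoning is sound but deserves one extra sentence: since $A$ is compact on an infinite-dimensional space, $\sigma(A)$ is a countable compact set accumulating only at~$0$; one can enclose it in a finite union $K$ of pairwise disjoint closed disks, and Runge's theorem then gives polynomials $q_n\to f$ uniformly on~$K$, not merely on $\sigma(A)$. Uniform convergence on the contour $\partial K$ is what forces $\|q_n(A)-P\|\to 0$ via the Cauchy integral, and subtracting the constant term $q_n(0)\to f(0)=0$ keeps $q_n(A)\in\mathcal A$. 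For the final step, the ``standard argument'' can be made precise as follows: with $P\in\mathcal A$ a nonzero finite-rank idempotent, the compression $P\mathcal A P$ acts transitively on the finite-dimensional space $P(X)$ (because $\overline{\mathcal A y}=X$ for every nonzero $y$), so by the classical Burnside theorem it equals $L\bigl(P(X)\bigr)$; in particular $\mathcal A$ contains a rank-one operator $x\otimes g$. Then $T_1(x\otimes g)T_2=(T_1x)\otimes(T_2^*g)\in\mathcal A$, and transitivity makes $\{T_1x\}$ norm-dense in $X$ while $\{T_2^*g\}$ is $w^*$-dense in $X^*$, whence every rank-one operator lies in the WOT-closure of $\mathcal A$. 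Since finite-rank operators are WOT-dense in $L(X)$, this forces $\mathcal A=L(X)$.

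It may also be worth noting that the paper's own proofs (of Theorems~\ref{pre-extension} and~\ref{pre-strong-ext}) follow the Hilden variant rather than the Schauder fixed-point route you take: instead of producing a fixed point, they build a single operator $T\in\mathcal C$ with $x_0\wedge T(\cdot)$ mapping a small set $U_0$ into itself, and then iterate to contradict (local) quasinilpotence. Either approach works for the statement you were asked to prove.
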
 

A ``quantitative'' version of the later theorem was obtained by
Lomonosov in~\cite{Lomonosov:91} for algebras of adjoint operators.
Before we state it, we need to introduce some notation. It is easy to
see that a subalgebra $\mathcal A$ of $L(X)$ has an invariant subspace
if and only if there exist non-zero $x\in X$ and $f\in X^*$ such that
$\langle f,Tx\rangle=0$ for every $T\in\mathcal A$. Now suppose that
$X$ is a dual space; that is, $X=Y^*$ for some Banach space~$Y$.  If
$T\in L(X)$ is a bounded adjoint operator on $X$ then there is a
unique operator $S\in L(Y)$ such that $S^*=T$. We will write
$S=T_*$; there will be no ambiguity as $T_*$ will always be
taken with respect to~$Y$.  We will write $\norm{T}_e$ for the
essential norm of~$T$, i.e., the distance from $T$ to the space of
compact operators. Note that in general, for an adjoint operator~$T$, 
one has $\norm{T}_e\le\norm{T_*}_e$. See~\cite{Axler:80} for an 
example of $T$ such that $\norm{T}_e<\norm{T_*}_e$.

\begin{theorem}[\cite{Lomonosov:91}]\label{lom-dual-2}
  Let $X$ be a dual Banach space and $\mathcal A$ a proper
  $W^*OT$-closed subalgebra of $L(X)$ consisting of adjoint operators.
  Then there exist non-zero $x\in X$ and $f\in X^*$ such that
  $\bigabs{\langle f, Tx\rangle}\le\norm{T_*}_e$ for all $T\in\mathcal
  A$.
\end{theorem}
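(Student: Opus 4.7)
The plan is to adapt Lomonosov's fixed-point argument from Theorem~\ref{comp-alg} to the present setting, with the essential norm of the predual supplying a compact-plus-contraction decomposition that replaces the assumption of a compact operator in the algebra. I would argue by contradiction: assume no such $x$ and $f$ exist. After rescaling inside $\mathcal A$, this is equivalent to the statement that for every pair of unit vectors $x\in X$ and $f\in X^*$ one can find $T\in\mathcal A$ with $\norm{T_*}_e<1$ and $\bigabs{\langle f,Tx\rangle}>1$. For each such $T$, the essential-norm estimate on the predual gives a decomposition $T_*=K_*+R_*$ in $L(Y)$ with $K_*$ compact and $\norm{R_*}<1$; taking adjoints yields $T=K+R$ in $L(X)$ with $K$ a compact adjoint operator and $\norm{R}<1$. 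This is the quantitative analogue of having a compact operator in $\mathcal A$, and it is the decomposition that makes Lomonosov's machinery applicable.

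From here I would follow the standard script. Fix $x_0\in X$ with $\norm{x_0}>1$ and let $V=\set{x\in X\mid \norm{x-x_0}\le 1}$, a norm-closed convex set avoiding $0$. Using a separating functional $f_0$ together with the reduction above, I would show that for each $x\in V$ there is $T_x\in\mathcal A$ whose compact part $K_x$ moves $x$ into a neighborhood of $V$ modulo the strict contraction $R_x$. Then I would extract a finite subcover, build a subordinate partition of unity $\{h_i\}$, and define a continuous map $\Phi(x)=\sum_i h_i(x)K_{x_i}T_{x_i}x$ on an appropriate compact convex subset of $X$. Schauder's fixed point theorem would yield a fixed point $x^*\ne 0$, which lies in the finite-dimensional kernel of $I-L$ for a compact operator $L$ built from $\mathcal A$; a Burnside-type argument on this kernel would then contradict the properness of $\mathcal A$ inside its $W^*OT$-closure.

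The main obstacle I anticipate is tuning the Schauder map so that its output actually lies in a compact convex set on which it has a fixed point, while keeping the contractive parts $R_{x_i}$ uniformly small enough that their contribution to $\Phi$ can be absorbed rather than spoil the fixed-point equation. The ingredient that is genuinely new relative to Lomonosov's original argument is that one must work with the essential norm of the \emph{predual}: this is what forces the hypothesis of adjoint operators and of $W^*OT$-closure, since these are precisely the conditions under which the decomposition $T=K+R$ and the subsequent compactness arguments go through in $X$ itself.
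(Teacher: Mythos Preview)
The paper does not prove this theorem. Theorem~\ref{lom-dual-2} is quoted from \cite{Lomonosov:91} as background and motivation; the paper's own contribution begins with Theorem~\ref{pre-extension}, whose proof (an iteration in the style of Hilden/Michaels~\cite{Michaels:77}, not a Schauder fixed-point argument) is the relevant model for the lattice results that follow. So there is no ``paper's own proof'' of Theorem~\ref{lom-dual-2} to compare your proposal against.

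As to the proposal itself: the opening reduction is correct---if the conclusion fails, then for each unit $x$ and $f$ there is $T\in\mathcal A$ with $\norm{T_*}_e<1$ and $\bigabs{\langle f,Tx\rangle}>1$, and one does get a compact-plus-small decomposition of $T$ on $X$. But the endgame you sketch is not how either Lomonosov's original 1991 proof or the Lindstr\"om--Schl\"uchtermann simplification~\cite{Lindstrom:00} proceeds, and in the form written it does not close. A Schauder fixed point $x^*=\Phi(x^*)$ with $\Phi(x)=\sum_i h_i(x)K_{x_i}T_{x_i}x$ does not place $x^*$ in the kernel of $I-L$ for any \emph{fixed} compact $L$, because the coefficients $h_i(x^*)$ depend on $x^*$; and even if it did, a ``Burnside-type argument on this kernel'' does not by itself contradict properness of a $W^*OT$-closed algebra of adjoint operators---that step requires an additional separation/approximation argument that you have not indicated. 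Lomonosov's actual proof in~\cite{Lomonosov:91} goes through a de~Branges-style extreme-point argument on a suitable weak$^*$-compact convex set of functionals, and the contradiction comes from analysing that extreme point, not from a finite-dimensional Burnside reduction. If you want to stay closer to the fixed-point/iteration philosophy, the argument in~\cite{Lindstrom:00} (or the iteration used here in the proof of Theorem~\ref{pre-extension}) is a better template than the 1973 Lomonosov--Hilden scheme you are invoking.
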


\subsection*{Invariant ideals of collections of positive operators}

Suppose now that $X$ is a Banach lattice. Recall that a linear (not
necessarily closed) subspace $\mathcal J\subseteq X$ is called an
\term{order ideal} if it is \term{solid}, i.e., $y\in \mathcal J$
implies $x\in \mathcal J$ whenever $\abs{x}\le\abs{y}$.  The following
version of Lomonosov's theorem for positive operators was proved by
B.~de~Pagter~\cite{dePagter:86}: \textit{a positive quasinilpotent
  compact operator on $X$ has a closed invariant order ideal}.  There
have been many extensions of this result, see,
e.g.,~\cite{Abramovich:02}. In particular, R.~Drnov\v
sek~\cite{Drnovsek:01} showed that a collection of positive operators
satisfying certain assumptions has a (common) invariant closed ideal.
To state his result precisely, we need to introduce more notations.

As usual, we write~$X_+$,~$X^*_+$, and $L(X)_+$ for the cones of
positive elements in~$X$,~$X^*$, and $L(X)$, respectively. 
Let $\C$ be a collection of positive operators on~$X$. 
Following~\cite{Abramovich:02}, we will denote by symbols
$\langle\C]$ and $[\C\rangle$ the \term{super left}
and the \term{super right commutants} of $\C$, respectively, i.e.,
\begin{equation*}
  \begin{split}
   \langle\C] &=
    \bigl\{S\in L(X)_+\colon ST\le TS\mbox{ for each }T\in\C\bigr\},\\
  [\C\rangle &=
    \bigl\{S\in L(X)_+\colon ST\ge TS\mbox{ for each }T\in\C\bigr\}.\\
  \end{split}
\end{equation*}
If $\D$ is another collection of operators then we write
$\C\D=\{TS\colon T\in\C, S\in \D\}$.
The symbol $\C^n$ is defined as the product of $n$ copies of $\C$.

An operator $T$ is \term{locally quasinilpotent} at $x$ if
$\limsup_n\norm{T^nx}^\frac{1}{n}=0$.  If $U$ is a subset of $X$ then
we write $\norm{U}=\sup\set{\norm{x}\colon x\in U}$.  We call a
collection $\C$ of operators \term{finitely quasinilpotent} at a
vector $x\in X$ if $\limsup_n\norm{\F^nx}^\frac{1}{n}=0$ for every
finite subcollection $\F$ of $\C$. Clearly, finite quasinilpotence at
$x$ implies local quasinilpotence at $x$ of every operator in the
collection.

If $E$ is a Banach lattice then an operator $T:E\to E$ is called
\term{AM-compact} if the image of every order interval under $T$ is
relatively compact.  Since order intervals are norm bounded, every
compact operator is AM-compact.  An operator $T$ is said to
\term{dominate} an operator $S$ if $\abs{Sx}\le T\abs{x}$ holds for all
$x\in E$.

\begin{theorem}[\cite{Drnovsek:01}] \label{drnovsek-modify} 
  If $\C$ is a collection of positive operators on a Banach lattice
  $X$ such that
  \begin{enumerate}
  \item $\C$ is finitely quasinilpotent at some positive non-zero
    vector, and
  \item some operator in $\C$ dominates a non-zero AM-compact
    operator,
  \end{enumerate}
  then $\C$ and $[\C\rangle$ have a closed invariant order ideal.
\end{theorem}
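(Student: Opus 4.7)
The plan is to argue by contradiction via a Lomonosov-style fixed point scheme adapted to the order structure, with AM-compactness supplying the compactness needed for Schauder's theorem and the absence of an invariant closed ideal supplying the density needed to set up the map.

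First I would fix the data: assume that $\C$ and $[\C\rangle$ share no common invariant closed order ideal, pick a non-zero $x_0 \in X_+$ at which $\C$ is finitely quasinilpotent, pick $T_0 \in \C$ dominating a non-zero AM-compact operator $K$, and choose $u \in X_+$ with $Ku \ne 0$. By AM-compactness, $W := \overline{K[0,u]}$ is norm-compact; and since $\abs{Kx} \le T_0 u$ on $[0,u]$, the set $W$ also lies in the order interval $[-T_0 u,\, T_0 u]$. A short check shows that $[\C\rangle$ is a convex multiplicative semigroup of positive operators containing the identity: the inequality $ST\ge TS$ is preserved under sums and positive scalars, and for $S_1, S_2 \in [\C\rangle$ and $T\in\C$ we have $S_1 S_2 T \ge S_1 T S_2 \ge T S_1 S_2$.

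Next I would use that the hypothesis forces, for each non-zero $y \in X_+$, the closed order ideal generated by $[\C\rangle y$ to be all of $X$. Consequently, for every non-zero $v \in W$ one can find $S_v \in [\C\rangle$ such that $S_v v$ dominates a prescribed multiple of $x_0$ up to a small error. Covering the compact set $W$ minus a small neighborhood of $0$ by finitely many such good neighborhoods, I would extract operators $S_1,\dots,S_N \in [\C\rangle$ and combine them, through a continuous partition of unity and a suitable truncation ensuring the range sits inside a compact convex subset of $[0,u]$, into a single continuous map $\Phi$. Schauder's theorem then yields $v^{*} \in [0,u]$ with $\Phi(v^{*}) = v^{*}$, from which one reads off an inequality of the form $v^{*} \le \lambda \sum_j \alpha_j S_j K v^{*}$ with $\alpha_j \ge 0$ summing to $1$.

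The final step would be to iterate this inequality and repeatedly invoke the relations $S_j T \ge T S_j$ for $T \in \C$ to push the super-right-commutant operators outward. This ought to produce, for every $n$, a lower bound of the form $\norm{\F^n x_0} \ge c \rho^n$ for some fixed finite subfamily $\F \subseteq \C$ containing $T_0$ and some $\rho > 0$, contradicting the finite quasinilpotence of $\C$ at $x_0$. The main obstacle, I expect, is precisely this last step: arranging matters so that, after unwinding all the commutations and iterations, only operators from a \emph{finite} subcollection of $\C$ appear in the resulting word. Both the multiplicative closure of $[\C\rangle$ and the domination $\abs{Kx} \le T_0 \abs{x}$ are essential ingredients that keep the final composition a word in a finite $\F \subseteq \C$, at which point finite (rather than merely pointwise) quasinilpotence becomes the natural and usable hypothesis.
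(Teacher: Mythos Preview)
The paper does not give its own direct proof of this theorem: it is quoted from Drnov\v{s}ek's paper and used as background. The only argument in the present paper bearing on it is the final Remark, which deduces the \emph{special case} of adjoint operators on a dual Banach lattice from Theorem~\ref{strong-ext} by an entirely different mechanism: one first obtains $x\in X_+$ and $f\in X^*_+$ with $\langle f,Tx\rangle\le\theta(T_*)$ for all $T\in\EC$, and then checks that at least one of the three explicit ideals $\mathcal J_1,\mathcal J_2,\mathcal J_3$ built from $x$, $f$, $K$ and $\EC$ is non-trivial. Your plan is instead the classical Lomonosov--Hilden/Drnov\v{s}ek fixed-point scheme, so the two routes are genuinely different; yours aims at the full statement, while the paper's Remark only recovers the adjoint case (and even there needs the extra hypothesis $K\ge 0$).

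Your outline has one concrete gap and one point where you are making life harder than necessary. The gap is the density step: from ``$\C$ and $[\C\rangle$ have no common invariant closed ideal'' you conclude that the closed order ideal generated by $[\C\rangle y$ is all of $X$ for each non-zero $y\in X_+$. That ideal is certainly $[\C\rangle$-invariant, but there is no reason it should be $\C$-invariant: for $T\in\C$ and $S\in[\C\rangle$ one only gets $TSy\le STy$, and $ST$ need not lie in $[\C\rangle$, so you cannot place $TSy$ back in the ideal. Hence the hypothesis does not force this ideal to be all of $X$, and you cannot manufacture the operators $S_v\in[\C\rangle$ you want.

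Both this gap and the ``unwinding'' difficulty you anticipate are resolved simultaneously by working with the semigroup $\DC$ (defined in the paper just before Theorem~\ref{extension}) rather than with $[\C\rangle$ alone. The set $\DC$ is an additive and multiplicative semigroup that contains $[\C\rangle\cdot\C$, is solid from below, and---crucially---is finitely quasinilpotent at $x_0$ whenever $\C$ is (the paper cites \cite[Lemma~10.4]{Abramovich:02} for exactly this fact). The ideal generated by $\DC y$ is then invariant under both $\C$ and $[\C\rangle$, so the density argument goes through; and the operator $R=\sum_j\alpha_jS_jT_0$ arising from the fixed point lies in $\DC$, so you get $\norm{R^nx_0}^{1/n}\to 0$ directly, with no need to rearrange words into a finite $\F\subseteq\C$. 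In short, replace $[\C\rangle$ by $\DC$ throughout your scheme and invoke the inherited finite quasinilpotence of $\DC$; the rest of your plan is sound.
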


Observe that if a collection $\C$ of positive operators  has
a (closed nontrivial) invariant ideal then there exist non-zero
positive $x$ and $f$ such that $\langle f, Tx\rangle = 0$ for all
$T\in\C$.  The converse is also true when $\C$ is a semigroup.

The goal of this paper is to ``quantize'' Theorem~\ref{drnovsek-modify}
in the same manner that Theorem~\ref{comp-alg} was ``quantized'' into 
Theorem~\ref{lom-dual-2}. Our proofs use ideas from \cite{Lindstrom:00}
and~\cite{Michaels:77}.

In the following lemma, we collect several standard facts
that we will use later. See, e.g.,~\cite{Abramovich:02} for the
proofs.
\begin{lemma} \label{yudin} 
  Let $Z$ be a vector lattice, $x\in Z_+$.
  Then for each $y,z\in Z$ one has
  \begin{enumerate}
   \item\label{yu-1} $\abs{x\wedge y-x\wedge z}\le\abs{y-z}$;
   \item\label{yu-2} if $\abs{y}\le z$ then $\abs{x-x\wedge z}\le
    \abs{x-x\wedge y}$;
   \item\label{yu-3} $\abs{x-x\wedge y}\le\abs{x-y}$.
  \end{enumerate}
\end{lemma}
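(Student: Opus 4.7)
The plan is to derive all three inequalities from two standard identities in a vector lattice: first, $a - a \wedge b = (a-b)^+$, which follows at once from $a \wedge b = \tfrac{1}{2}(a + b - \abs{a-b})$; and second, the Birkhoff-type bound $\abs{a^+ - b^+} \le \abs{a - b}$, which is elementary. Given these, each of the three parts becomes essentially a one-line check, with no need for the Banach-lattice structure or any analytic input.

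For~(\ref{yu-3}), I would apply the first identity with $a = x$ and $b = y$ to obtain $x - x \wedge y = (x-y)^+ \ge 0$. Taking absolute values then yields $\abs{x - x \wedge y} = (x-y)^+ \le \abs{x - y}$.

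For~(\ref{yu-2}), the hypothesis $\abs{y} \le z$ forces $y \le \abs{y} \le z$, whence by monotonicity of $\wedge$ in its second argument we get $x \wedge y \le x \wedge z$. Since $x \wedge y \le x$ and $x \wedge z \le x$, both differences $x - x \wedge y$ and $x - x \wedge z$ lie in $Z_+$, so the desired inequality $\abs{x - x \wedge z} \le \abs{x - x \wedge y}$ reduces to $x - x \wedge z \le x - x \wedge y$, which has just been established.

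For~(\ref{yu-1}), applying the first identity twice gives
\[
  x \wedge y - x \wedge z = (x - z)^+ - (x - y)^+,
\]
and then the second identity, applied with $a = x - z$ and $b = x - y$, yields $\abs{x \wedge y - x \wedge z} \le \abs{(x - z) - (x - y)} = \abs{y - z}$. I foresee no real obstacle: the whole lemma is a routine consequence of the two identities above, which is presumably why the authors simply cite \cite{Abramovich:02} rather than reproduce the argument.
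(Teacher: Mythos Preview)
Your argument is correct. Each of the three claims follows cleanly from the two identities you isolate, namely $a - a\wedge b = (a-b)^+$ and $\abs{a^+ - b^+}\le\abs{a-b}$, and your derivations of (\ref{yu-1})--(\ref{yu-3}) from these are sound. Incidentally, none of the three steps actually uses the hypothesis $x\in Z_+$; the inequalities hold for arbitrary $x$.

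As you anticipated, the paper does not supply its own proof of this lemma: it states the three facts and refers the reader to~\cite{Abramovich:02}. There is therefore no ``paper's proof'' to compare against, and your self-contained verification is a reasonable substitute for the citation.
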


\bigskip

From now on, $X$ will be a real Banach lattice. We will also assume that $X$
is a dual Banach space; that is, $X=Y^*$ for some (fixed) Banach space $Y$.
We will start with a version of Theorem~\ref{lom-dual-2} for
convex collections of positive operators.

\begin{theorem}\label{pre-extension}
  Let $\C$ be a convex collection of positive adjoint operators
  on~$X$.  If there is $x_0>0$ such that every operator from $\C$ is
  locally quasinilpotent at $x_0$ then there exist non-zero $x\in X_+$
  and $f\in X^*_+$ such that $\langle f,Tx\rangle\le\norm{T_*}_e$
  for all $T\in\C$.
\end{theorem}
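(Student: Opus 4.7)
\emph{Strategy.} I would proceed by contradiction, modeled on Lomonosov's 1991 proof and exploiting both the convexity of $\C$ and the positivity of its members.

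\emph{Step 1 (upgrading the hypothesis).} For convex $\C$, local quasinilpotence of each operator at $x_0$ already forces finite quasinilpotence at $x_0$: given any finite $\F = \{T_1, \ldots, T_k\} \subseteq \C$, the average $S = (T_1 + \cdots + T_k)/k$ belongs to $\C$, and positivity yields, for every word $(i_1, \ldots, i_n) \in \{1, \ldots, k\}^n$, the pointwise bound $S^n x_0 \ge k^{-n}\, T_{i_1} \cdots T_{i_n} x_0$ coming from the binomial expansion. Hence $\|T_{i_1} \cdots T_{i_n} x_0\|^{1/n} \le k \, \|S^n x_0\|^{1/n} \to 0$ uniformly in the word. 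This finite quasinilpotence is the property to be contradicted.

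\emph{Step 2 (negation, compact splitting, $w^*$-open cover).} Assume for contradiction that for every non-zero $x \in X_+$ and $f \in X^*_+$ some $T \in \C$ has $\langle f, Tx\rangle > \|T_*\|_e$. Fix $\varepsilon>0$; for each $T \in \C$ choose a compact $K_T$ on $Y$ with $\|T_* - K_T\| < \|T_*\|_e + \varepsilon$, so $T = K_T^* + R_T$ with $K_T^*$ a $w^*$-$w^*$-continuous compact operator on $X$ and $\|R_T\| < \|T_*\|_e + \varepsilon$. Fix a positive $f_0 \in X^*_+$ and a $w^*$-compact convex subset $Q$ of $X_+$ containing $x_0$ (an order interval $[0, e]$ with $e \ge x_0$ works, being $w^*$-closed and norm-bounded). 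For each $y \in Q$ the negation gives a witness $T_y \in \C$. Since $K_{T_y}^*$ is compact and $w^*$-$w^*$-continuous, it maps $(Q, w^*)$ continuously into $(X, \|\cdot\|)$; consequently $z \mapsto \langle f_0, K_{T_y}^* z\rangle$ is $w^*$-continuous on $Q$. Combining this continuity with the norm bound on $R_{T_y}$ and the slack in the strict inequality produces a $w^*$-open $U_y \subseteq Q$ containing $y$ on which $\langle f_0, T_y z\rangle > \|T_{y*}\|_e$ persists. $w^*$-compactness of $Q$ extracts a finite subcover with witnesses $T_1, \ldots, T_k \in \C$.

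\emph{Step 3 (orbit, contradiction, and the main obstacle).} Iterate inside $Q$: set $y_0 = x_0$ and, inductively, pick $j_n$ with $y_n \in U_{j_n}$ and re-enter $Q$ from $T_{j_n} y_n$ via a suitable rescaling, truncating by $e$ (via Lemma \ref{yudin}) if necessary so that the $f_0$-lower bound is preserved. A geometric lower bound of the form $\|T_{j_n} \cdots T_{j_0} x_0\| \ge \lambda^{n+1}$ then contradicts the finite quasinilpotence of Step 1. The main obstacle is precisely this iteration: because $\C$ is only convex, the products $T_{j_n} \cdots T_{j_0} x_0$ escape $\C$, so one must interleave the compact/essential splitting with the renormalization, arranging that the essential contributions $R_{T_j}$ (of norm at most $\|T_{j*}\|_e + \varepsilon$) are absorbed by the $f_0$-gain $\langle f_0, T_j z\rangle - \|T_{j*}\|_e > 0$ supplied by the cover, and that the truncation used to stay inside $Q$ does not erode the propagating inequality below unity.
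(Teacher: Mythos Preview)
Your Step~1 is correct, but the paper shows it is unnecessary. Two concrete problems remain in Steps~2--3. First, $Q=[0,e]$ contains $0$, and the negated conclusion supplies a witness $T_y$ only for \emph{non-zero} $y\in X_+$, so the sets $U_y$ cannot cover~$Q$; the right $w^*$-compact set is $U_0=\{x\in X_+:\norm{x-x_0}\le\tfrac12\}$, which is bounded away from~$0$. Second, the ``main obstacle'' you flag in Step~3 is genuine and your sketch does not overcome it: the gain $\langle f_0,T_j z\rangle-\norm{(T_j)_*}_e$ is not uniformly positive on the open sets $U_j$, truncation by $e$ can destroy the $f_0$-value, and there is no evident mechanism by which the essential remainders $R_{T_j}$ are absorbed across iterations so as to produce a fixed $\lambda>0$ with $\norm{T_{j_n}\cdots T_{j_0}x_0}\ge\lambda^{n+1}$.

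The paper sidesteps all of this with two moves. Rather than negating the conclusion and fixing $f_0$, it forms the convex solid set $H_\varepsilon(x)=\{z:\abs{z}\le Tx\text{ for some }T\in\C\text{ with }\norm{T_*}_e<\varepsilon\}$ and argues by dichotomy: if some $H_\varepsilon(x)$ is not dense, a separating functional $g$ (replaced by $h=\abs{g}$ and scaled) yields the desired $f$ directly, with no iteration at all. If every $H_\varepsilon(x)$ is dense, then for each $x\in U_0$ one finds $T_x\in\C$ with $\norm{(T_x)_*}_e<\varepsilon$ and $\norm{x_0-x_0\wedge T_x x}<\varepsilon$; after the compact-approximation and finite-cover step one takes the \emph{sum} $T=T_{x_1}+\cdots+T_{x_n}$, which lies in $\C$ once $\C$ is closed under positive scalars. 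By Lemma~\ref{yudin}\eqref{yu-2} and $T\ge T_{x_k}$, the map $y\mapsto x_0\wedge Ty$ sends $U_0$ into itself, and since $y_n:=x_0\wedge Ty_{n-1}\le T^n x_0$, local quasinilpotence of the \emph{single} operator $T$ forces $y_n\to 0$, contradicting $y_n\in U_0$. No products of distinct $T_j$, no renormalization, and no essential-norm bookkeeping are needed; the sum trick subsumes your Step~1.
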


\begin{remark}
  One might try to deduce Theorem~\ref{pre-extension} from
  Theorem~\ref{lom-dual-2} by considering the $W^*OT$-closed algebra
  generated by~$\C$. However, the example in~\cite{Hadwin:86} shows
  that there exists an algebra of nilpotent operators on a Hilbert
  space $H$ which is $WOT$-dense in $L(H)$.
\end{remark}

\begin{proof}[Proof of Theorem~\ref{pre-extension}]
  Clearly, we may assume that $\norm{x_0}=1$.  Also, without loss of
  generality, $\C$ is closed under taking positive multiples of its
  elements, otherwise we replace $\C$ with $\set{\alpha T\colon
    T\in\C,0<\alpha\in\mathbb R}$. 
  Fix $0<\varepsilon<\frac{1}{10}$. Define
  \begin{eqnarray*}
    \Cepsilon&=&
    \bigl\{T\in\C\colon\norm{T_*}_e<\varepsilon\bigr\}\mbox{ and}\\
     H_\varepsilon(x)&=&\bigl\{z\in X\colon \abs{z}\le Tx
      \mbox{ for some }T\in\Cepsilon\bigr\},\quad x\in X_+.
  \end{eqnarray*}
  Then $H_\varepsilon(x)$ is convex
  and solid for all $x\in X_+$.

  Suppose that $\overline{H_\varepsilon(x)}\ne X$ for some nonzero
  $x\in X_+$. Since $H_\varepsilon(x)$ is convex, there
  is a nonzero $g\in X^*$ such that $g(y)\le 1$ for all $y\in
  H_\varepsilon(x)$.  Consider $h=\abs{g}\in X^*$.
  Then %using a well-known formula we get
  for any $y\in H_\varepsilon(x)$ we have 
  \begin{displaymath}
    h(y)\le h\bigl(\abs{y}\bigr)=
      \sup\bigl\{g(u)\colon-\abs{y} \le u\le \abs{y}\bigr\}\le 1
  \end{displaymath}
  since $H_\varepsilon(x)$ is solid. In particular, $\langle
  h,Tx\rangle\le 1$ for all $T\in\Cepsilon$.

  Put $f=\frac{\varepsilon}{2}h$. We claim that 
  $\langle f,Tx\rangle\le\norm{T_*}_e$ for each $T\in\C$. 
  Indeed, if $T$ is compact,
  i.e., $\norm{T_*}_e=0$, then $\alpha T\in\Cepsilon$ for all
  $0<\alpha\in\mathbb R$. Therefore $\langle h,\alpha Tx\rangle\le 1$
  for all $0<\alpha\in\mathbb R$, so that $\langle f,Tx\rangle=\langle
  h,Tx\rangle=0$.  If $T$ is not compact then $\frac{\varepsilon
    T}{2\norm{T_*}_e}\in \Cepsilon$, whence
  \begin{displaymath}
    \langle f,Tx\rangle=\norm{T_*}_e\Big
    \langle h,\frac{\varepsilon T}{2\norm{T_*}_e}x\Big\rangle
    \le\norm{T_*}_e.
  \end{displaymath}

  Suppose now that $\overline{H_\varepsilon(x)}=X$ for all nonzero
  $x\in X_+$.  Then, in particular, for each $x\in X$ there is $y_x\in
  H_\varepsilon(x)$ such that $\norm{x_0-y_x}<\varepsilon$.  Fix an
  operator $T_x\in\Cepsilon$ such that $\abs{y_x}\le T_xx$.  Then
  \eqref{yu-2} and \eqref{yu-3} of Lemma~\ref{yudin} yield
  $\norm{x_0-x_0\wedge T_xx}<\varepsilon$.

  Let $U_0=\bigl\{x\in X_+\colon\norm{x-x_0}\le\frac{1}{2}\bigr\}$.
  Since $\bignorm{(T_x)_*}_e<\varepsilon$, there is an adjoint compact
  operator $K_x\in\K(X)$ such that $\norm{K_x-T_x}<\varepsilon$.  As
  compact adjoint operators are $w^*$-$\norm{\cdot}$ continuous on
  norm bounded sets, it follows that there is a relative (to $U_0$)
  $w^*$-\,open neigborhood $W_x\subseteq U_0$ of $x$ such that
  $\norm{K_xz-K_xx}<\varepsilon$ whenever $z\in W_x$.
  Then for every $y\in W_x$ we have:
  \begin{multline*}
   \bignorm{x_0-x_0\wedge T_xy}\le
   \bignorm{x_0-x_0\wedge T_xx}
   +\bignorm{x_0\wedge T_xx-x_0\wedge K_xx}\\
   +\bignorm{x_0\wedge K_xx-x_0\wedge K_xy}
   +\bignorm{x_0\wedge K_xy-x_0\wedge T_xy}\\
   \le\bignorm{x_0-x_0\wedge T_xx}+\norm{T_xx-K_xx}
   +\norm{K_xx-K_xy}+\norm{K_xy-T_xy}\\
   <\varepsilon+\varepsilon\norm{x}+\varepsilon
   +\varepsilon\norm{y}<5\varepsilon<\tfrac{1}{2}.
  \end{multline*}
  Together with $T_x\ge 0$ this yields $(x_0\wedge T_xy)\in U_0$ for each 
  $y\in W_x$.

  Note that $U_0$ is $w^*$-\,compact since $U_0$ is the intersection
  of $X_+$ with a closed ball.
  Hence, we can find $x_1,\dots,x_n\in U_0$ such that
  $U_0=\bigcup_{k=1}^n W_{x_k}$. Define
  $T=T_{x_1}+\dots+T_{x_n}\in\C$.  Then by
  Lemma~\ref{yudin}\eqref{yu-2}, we have $x_0\wedge Tx\in U_0$ for
  every $x\in U_0$.

  Define a sequence $(y_n)\subseteq U_0$ by $y_0=x_0$ and
  $y_{n+1}=x_0\wedge Ty_n$. Clearly $0\le y_n$ for all~$n$, and
  $y_{n}\le Ty_{n-1}\le\dots\le T^{n}y_0$, so that
  $\norm{y_n}\le\norm{T^nx_0}$. Thus $y_n\to 0$ as $n\to\infty$ by
  local quasinilpotence. This is a contradiction by the definition
  of~$U_0$.
\end{proof}

The next theorem shows that the conclusion of
Theorem~\ref{pre-extension} is also true for some collections of
operators which are not necessarily convex. We will, however, use a
more restrictive quasinilpotence condition. We will need some
additional definitions.

Let $\C$ be a collection of positive operators.
Following~\cite{Abramovich:02}, define
\begin{multline*}
  \DC=\Bigl\{D\in L(X)_+{}\mid\exists\space T_1,\dots,T_k
         \in[\C\rangle \mbox{ and }\\
  S_1,\dots,S_k\in\bigcup\limits_{n=1}^\infty\C^n
    \mbox{ such that }D\le\sum_{i=1}^kT_iS_i\Bigr\}
\end{multline*}
In other words, $\DC$ is the smallest additive and multiplicative
semigroup which contains the collection $[\C\rangle\cdot\C$ and such
that $T\in\DC$ and $0\le S\le T$ imply $S\in\DC$
(see~\cite{Abramovich:02}).

Let $\C$ be a collection of positive adjoint operators on~$X$. Define 
$$
\EC=\bigl\{T\in\DC\colon T=S^*\mbox{ for some }S\in L(Y)\bigr\}.
$$
Since adjoint operators are stable under addition and multiplication, 
$\EC$ is an additive and multiplicative semigroup. % by proposition
It is also clear that $\C\subseteq\EC$.

\begin{theorem}\label{extension}
  Let $\C$ be a collection of positive adjoint operators on~$X$.  If
  $\C$ is finitely quasinilpotent at some $x_0>0$
  then there exist non-zero $x\in X_+$ and $f\in X^*_+$ such that
  $\langle f,Tx\rangle\le\norm{T_*}_e$ for all $T\in\EC$.
\end{theorem}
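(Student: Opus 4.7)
The strategy is to apply Theorem~\ref{pre-extension} to $\C'=\conv\EC$. Since positive adjoint operators are closed under positive linear combinations, $\C'$ is a convex collection of positive adjoint operators containing~$\EC$. Thus once every element of~$\C'$ is shown to be locally quasinilpotent at~$x_0$, Theorem~\ref{pre-extension} yields non-zero $x\in X_+$ and $f\in X^*_+$ with $\langle f,Tx\rangle\le\norm{T_*}_e$ for every $T\in\C'$, and restriction to~$\EC$ finishes the proof.

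The principal task is to upgrade the finite quasinilpotence of~$\C$ at~$x_0$ to finite quasinilpotence of~$\EC$ at~$x_0$. Fix $D_1,\dots,D_m\in\EC$; by the definition of~$\DC$, each admits a domination $D_j\le\sum_{i=1}^{k_j}R_{j,i}S_{j,i}$ with $R_{j,i}\in[\C\rangle$ and $S_{j,i}\in\C^{n_{j,i}}$. Let $\F\subseteq\C$ be the finite set of factors appearing in any $S_{j,i}$, and set $M=\max_{j,i}\norm{R_{j,i}}$, $K=\max_j k_j$, $c=\max_{j,i}n_{j,i}$. Expanding by positivity gives
\begin{displaymath}
0\le D_{j_1}\cdots D_{j_N}x_0\le\sum(R_1S_1)(R_2S_2)\cdots(R_NS_N)x_0,
\end{displaymath}
a sum of at most~$K^N$ terms of the form $(R_1S_1)\cdots(R_NS_N)$ with $R_\ell\in[\C\rangle$, $S_\ell\in\C^{n_\ell}$, $n_\ell\le c$. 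The defining super-commutation $RT\ge TR$ for $R\in[\C\rangle$, $T\in\C$ extends by induction to $T\in\bigcup_n\C^n$, so sliding each $R_\ell$ leftward past the preceding $\C$-product yields
\begin{displaymath}
(R_1S_1)\cdots(R_NS_N)\le(R_1\cdots R_N)(S_1\cdots S_N).
\end{displaymath}
Setting $L=n_1+\cdots+n_N\in[N,cN]$ and applying to~$x_0$ gives the norm bound $M^N\bignorm{\F^L x_0}$. Finite quasinilpotence of~$\F$ at~$x_0$ yields, for any $\varepsilon\in(0,1)$, the estimate $\bignorm{\F^L x_0}\le\varepsilon^L\le\varepsilon^N$ for all sufficiently large~$N$. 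Summing the at most~$K^N$ terms gives $\bignorm{D_{j_1}\cdots D_{j_N}x_0}\le(KM\varepsilon)^N$; taking $N$-th roots and letting $\varepsilon\to 0$ establishes finite quasinilpotence of~$\EC$ at~$x_0$.

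Finally, any convex combination $T=\sum_{k=1}^m\alpha_k T_k$ with $T_k\in\EC$ and $\alpha_k\in[0,1]$, $\sum\alpha_k=1$, satisfies $T\le T_1+\cdots+T_m$, so $T^n x_0\le\sum_{(k_1,\dots,k_n)}T_{k_1}\cdots T_{k_n}x_0$ and $\bignorm{T^n x_0}\le m^n\bignorm{\{T_1,\dots,T_m\}^n x_0}$. The finite quasinilpotence of~$\EC$ just established forces $\bignorm{T^n x_0}^{1/n}\to 0$, verifying the hypothesis of Theorem~\ref{pre-extension} for~$\C'$. The main obstacle is the rearrangement argument in the middle paragraph: one must use the super-right-commutation to reorder $[\C\rangle$- and $\C$-factors while keeping the alphabet~$\F$ and the constants $M$, $K$, $c$ independent of the product length~$N$, so that the finite quasinilpotence of~$\C$ can be invoked uniformly.
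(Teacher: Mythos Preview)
Your proof is correct and follows essentially the same strategy as the paper's: show that $\EC$ inherits finite quasinilpotence at~$x_0$ from~$\C$, then apply Theorem~\ref{pre-extension}. The paper is briefer only because it observes that $\EC$ is already convex (since $\DC$ is additive and order-solid, and adjoint operators form a linear subspace, so passing to $\conv\EC$ is unnecessary) and cites your middle-paragraph rearrangement argument as \cite[Lemma~10.4]{Abramovich:02} rather than reproving it.
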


\begin{proof}
  Clearly $\EC$ is convex.  Note that the finite quasinilpotence of
  $\C$ at $x_0$ implies the finite quasinilpotence of $\DC$ (and,
  therefore, of $\EC$) at $x_0$ (see,
  e.g.,~\cite[Lemma 10.4]{Abramovich:02}). Finally,
  apply Theorem~\ref{pre-extension} to~$\EC$.
\end{proof}

%\begin{remark}\label{comment-1}
%If we had zero in the right hand side of the inequality from the
%statement of Theorem~\ref{extension}, it would imply the existence 
%of a non-trivial ideal invariant under~$\EC$, and, in particular, under~$\C$.
%\end{remark}

%If $Y$ is a Banach lattice with an order continuous 
%norm then we can improve the conclusion of Theorem~\ref{pre-extension}. 
%Recall that if a Banach lattice $E$ has order continuous norm then 
%AM-compact operators form a band in the lattice of regular operators 
%acting on $E$ (see, for example,~\cite{Zaa83}, Theorem 123.4). In particular,  
%they must form a norm-closed subspace of $L(E)$. %If we denote the space of 
%Define
%$$
%\theta(T)=\inf\left\{\norm{T-K}\colon K\mbox{ is AM-compact}\right\}
%$$
%Clearly, $\theta$ is a homogeneous subadditive functional such that 
%$\theta(T)=0$ if, and only if, $T$ is an AM-compact operator on $E$.
%Recall also that if 
%$E$ has order continuous norm and $T$ is a regular operator on $E$ then 
%AM-compactness of $T^*$ implies AM-compactness of $T$ %. Conversely, if $E^*$
%%has order continuous norm and $T\in\AMC(E)$ then $T^*\in\AMC(E^*)$
%(see~\cite{Zaa83}, Theorem 125.6).% for both of the statements).

Now suppose, in addition, that $Y$ is itself a Banach lattice.
Then we can improve the conclusion of Theorem \ref{pre-extension}. 
For an operator $T$ acting on~$Y$, define
$$
\theta(T)=\inf\bigl\{\norm{T-K}\colon K\mbox{ is AM-compact}\bigr\}.
$$
Clearly, $\theta$ is a seminorm on $L(Y)$ and $\theta(T)=0$ if and
only if $T$ is AM-compact (because the subspace of AM-compact
operators in L(Y) is norm closed). 

For $\xi\in Y_+$, define a seminorm $\rho_\xi$ on $X$ via
$\rho_\xi(x)=\abs{x}(\xi)$.

\begin{lemma}\label{AM-continuity}
  If $\xi\in Y_+$ and $K\in L(Y)$ is AM-compact, then
  $K^*\colon(B_X,w^*)\to(X,\rho_{\xi})$ is continuous.
\end{lemma}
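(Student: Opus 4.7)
The plan is to rewrite $\rho_\xi(K^*x_\alpha - K^*x)$ as a supremum over a relatively compact set in $Y$, and then invoke the standard fact that a bounded weak-* convergent net converges uniformly on norm-compact sets.

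First I would recall the Riesz--Kantorovich-type formula: for any $\phi\in X=Y^*$ and $\xi\in Y_+$, the modulus satisfies
$$
\abs{\phi}(\xi)=\sup\bigl\{\abs{\phi(y)}\mid y\in Y,\ \abs{y}\le\xi\bigr\}.
$$
Applied to $\phi=K^*(x_\alpha-x)$ with $x_\alpha\to x$ in $(B_X,w^*)$, and using the identity $\langle K^*u,y\rangle=\langle u,Ky\rangle$, this gives
$$
\rho_\xi(K^*x_\alpha-K^*x)=\sup_{z\in K[-\xi,\xi]}\bigabs{\langle x_\alpha-x,z\rangle}.
$$
So the lemma reduces to showing that $\langle x_\alpha-x,\,\cdot\,\rangle\to 0$ uniformly on $K[-\xi,\xi]$.

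Now I would invoke AM-compactness: since $[-\xi,\xi]$ is an order interval and $K$ is AM-compact, the set $C:=\overline{K[-\xi,\xi]}$ is norm-compact in $Y$. Finally I would run the standard $3\varepsilon$-argument for uniform convergence of weak-*-convergent bounded nets on compact sets: given $\varepsilon>0$, pick a finite $\varepsilon$-net $z_1,\dots,z_N$ in $C$; by weak-* convergence, there is $\alpha_0$ such that $\abs{\langle x_\alpha-x,z_i\rangle}<\varepsilon$ for all $i$ and all $\alpha\ge\alpha_0$; since $\norm{x_\alpha},\norm{x}\le 1$, for an arbitrary $z\in C$ and a chosen $z_i$ with $\norm{z-z_i}<\varepsilon$ one has
$$
\bigabs{\langle x_\alpha-x,z\rangle}\le\bigabs{\langle x_\alpha-x,z_i\rangle}+\norm{x_\alpha-x}\cdot\norm{z-z_i}<3\varepsilon.
$$
This yields $\rho_\xi(K^*x_\alpha-K^*x)\to 0$, proving the claimed continuity.

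There is no real obstacle here: each of the three ingredients (the Riesz--Kantorovich formula for the modulus of a functional, relative compactness of $K[-\xi,\xi]$, and uniform weak-* convergence on compact sets for bounded nets) is standard. The only minor point worth checking is that the uniform-convergence argument works for nets rather than just sequences, but the finite cover argument above is manifestly net-compatible.
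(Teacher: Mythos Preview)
Your proof is correct and follows essentially the same route as the paper's: both rewrite $\rho_\xi(K^*x_\alpha-K^*x)$ via the Riesz--Kantorovich formula as a supremum over $K[-\xi,\xi]$, invoke AM-compactness to make that set (relatively) compact, and then run the finite-cover $3\varepsilon$ argument to obtain uniform convergence of the bounded weak-$*$ net. The only cosmetic differences are that the paper writes the supremum without the absolute value (harmless since $K[-\xi,\xi]$ is symmetric) and works directly with $A=K[-\xi,\xi]$ rather than its closure.
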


\begin{proof}
  Let $x_\alpha\xrightarrow{w^*}x$, with $x_\alpha$,
  $x\in B_X$. Write
  \begin{displaymath}
   \rho_{\xi}\bigl(K^*x_\alpha-K^*x\bigr)
   =\bigabs{K^*x_\alpha-K^*x}(\xi)
   =\sup\limits_{-\xi\le\zeta\le\xi}
   \langle x_\alpha-x,K\zeta\rangle
   =\sup\limits_{\nu\in A}\langle x_\alpha-x,\nu\rangle,
  \end{displaymath}
  where $A=K\bigl([-\xi,\xi]\bigr)$. By assumption, $K$ is
  AM-compact, thus $A$ is a $\norm{\cdot}$-compact set.

  For $\nu\in A$, fix $\alpha_\nu$ such that
  \begin{math}
   \bigabs{\langle x_\alpha-x,\nu\rangle}<\frac{\varepsilon}{3}
  \end{math}
  whenever $\alpha\ge\alpha_\nu$. If $\mu\in Y$ is such that
  $\norm{\mu-\nu}<\frac{\varepsilon}{3}$ then for
  $\alpha\ge\alpha_\nu$ we have
  \begin{displaymath}
   \bigabs{\langle x_\alpha-x,\mu\rangle}
   \le\tfrac{\varepsilon}{3}\norm{x_\alpha-x}+
   \bigabs{\langle x_\alpha-x,\nu\rangle}<
   \tfrac{2\varepsilon}{3}+\tfrac{\varepsilon}{3}
   =\varepsilon.
  \end{displaymath}
  Pick $\nu_1,\dots,\nu_n\in A$ such that
  $A\subseteq\bigcup\limits_{k=1}^n B(\nu_k,\frac{\varepsilon}{3})$.
  Then for every
  $\alpha\ge\max\set{\alpha_{\nu_1},\dots,\alpha_{\nu_n}}$ we must
  have $\rho_{\xi}(K^*x_\alpha-K^*x)<\varepsilon$.
\end{proof}

An operator $T\in L(X)$ will be said \term{$w^*$-locally 
quasinilpotent} at a pair $(x_0,\xi_0)$, where $x_0\in X$ and 
$\xi_0\in Y$, if $\bigabs{T^nx_0(\xi_0)}^\frac{1}{n}
\to 0$. Clearly, if $T$ is locally quasinilpotent at $x_0$ then 
$T$ is $w^*$-locally quasinilpotent at $(x_0,\xi_0)$ for every
$\xi_0\in Y$.

\begin{theorem}\label{pre-strong-ext}
  Suppose that $X=Y^*$ for some Banach lattice $Y$, and $\C$ is a convex
  collection of positive adjoint operators on~$X$.  Suppose that there
  exists a pair $(x_0,\xi_0)\in X_+\times Y_+$ such that $x_0(\xi_0)\ne 0$
  and every operator from $\C$ is $w^*$-locally quasinilpotent at
  $(x_0,\xi_0)$. Then there exist non-zero $x\in X_+$ and $f\in X^*_+$ such
  that $\langle f,Tx\rangle\le\theta(T_*)$ for all $T\in\C$.
\end{theorem}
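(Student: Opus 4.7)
The plan is to adapt the proof of Theorem \ref{pre-extension}, making three substitutions throughout: the essential norm $\norm{T_*}_e$ is replaced by $\theta(T_*)$; the norm on $X$ is replaced, whenever $w^*$-compatibility is required, by the seminorm $\rho_{\xi_0}$; and approximation by compact operators is replaced by approximation by AM-compact operators (this is where Lemma~\ref{AM-continuity} enters). As in that proof, I would normalize $x_0(\xi_0)=1$ and assume $\C$ is closed under multiplication by positive scalars (hence, by convexity, also under finite sums). For a small $\varepsilon>0$ to be fixed later, set
\[
\Cepsilon=\bigl\{T\in\C : \theta(T_*)<\varepsilon\bigr\},\qquad
H_\varepsilon(x)=\bigl\{z\in X : \abs{z}\le Tx \text{ for some } T\in\Cepsilon\bigr\},
\]
both convex, with $H_\varepsilon(x)$ solid.

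If $\overline{H_\varepsilon(x)}\neq X$ for some nonzero $x\in X_+$, the separation argument from Theorem~\ref{pre-extension} carries over verbatim: Hahn-Banach and solidity produce $h\in X^*_+$ with $\langle h,Tx\rangle\le 1$ for all $T\in\Cepsilon$, and $f=\tfrac{\varepsilon}{2}h$ satisfies $\langle f,Tx\rangle\le\theta(T_*)$ on all of $\C$. (The case $\theta(T_*)=0$ forces $\alpha T\in\Cepsilon$ for every $\alpha>0$, hence $\langle h,Tx\rangle=0$; for $\theta(T_*)>0$, use $\frac{\varepsilon T}{2\theta(T_*)}\in\Cepsilon$.) So the real work is to rule out the alternative $\overline{H_\varepsilon(x)}=X$ for every nonzero $x\in X_+$.

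For this, set
\[
U_0=\bigl\{x\in X_+ : \norm{x}\le\norm{x_0},\ \rho_{\xi_0}(x-x_0)\le\tfrac{1}{2}\bigr\}.
\]
A key observation is that $U_0$ is $w^*$-compact: the norm ball and $X_+$ are $w^*$-closed, while $\rho_{\xi_0}(x-x_0)=\sup\bigl\{\langle x-x_0,\eta\rangle : -\xi_0\le\eta\le\xi_0\bigr\}$ is $w^*$-lower-semicontinuous as a supremum of $w^*$-continuous affine functionals. For $x\in U_0$, the density assumption combined with $\rho_{\xi_0}\le\norm{\xi_0}\,\norm{\cdot}$ yields $y_x\in H_\varepsilon(x)$ with $\rho_{\xi_0}(x_0-y_x)<\varepsilon$; picking $T_x\in\Cepsilon$ with $\abs{y_x}\le T_xx$ and applying Lemma~\ref{yudin}\eqref{yu-2},\eqref{yu-3} gives $\rho_{\xi_0}(x_0-x_0\wedge T_xx)<\varepsilon$. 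Now choose an AM-compact $K^\sharp_x\in L(Y)$ with $\norm{(T_x)_*-K^\sharp_x}<\varepsilon$, put $K_x=(K^\sharp_x)^*$, and invoke Lemma~\ref{AM-continuity} to get a relative $w^*$-neighborhood $W_x$ of $x$ in $U_0$ on which $\rho_{\xi_0}(K_xz-K_xx)<\varepsilon$. Running the same four-term estimate as in Theorem~\ref{pre-extension} but with $\rho_{\xi_0}$ in place of $\norm{\cdot}$ (using Lemma~\ref{yudin}\eqref{yu-1} on the $\wedge$-differences and the bound $\rho_{\xi_0}(u)\le\norm{\xi_0}\norm{u}$ on the two $T_x-K_x$ terms) yields $\rho_{\xi_0}(x_0-x_0\wedge T_xy)\le c\varepsilon$ for $y\in W_x$ with a constant $c=c(\norm{x_0},\norm{\xi_0})$; shrinking $\varepsilon$ makes this $<\tfrac{1}{2}$, so $x_0\wedge T_xy\in U_0$. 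A finite $w^*$-subcover $U_0=\bigcup_k W_{x_k}$ and $T:=\sum_k T_{x_k}\in\C$ then give $x_0\wedge Tx\in U_0$ for every $x\in U_0$ via Lemma~\ref{yudin}\eqref{yu-2}. The iterates $y_0=x_0$, $y_{n+1}=x_0\wedge Ty_n$ stay in $U_0$ and satisfy $y_n\le T^n x_0$, so $0\le y_n(\xi_0)\le T^n x_0(\xi_0)\to 0$ by $w^*$-local quasinilpotence of $T\in\C$ at $(x_0,\xi_0)$; but $y_n\in U_0$ forces $y_n(\xi_0)\ge x_0(\xi_0)-\rho_{\xi_0}(y_n-x_0)\ge\tfrac{1}{2}$, the desired contradiction.

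The main obstacle is replacing every norm estimate in the Theorem~\ref{pre-extension} proof by a $\rho_{\xi_0}$-estimate while preserving the two features that proof relies on: the $w^*$-compactness of $U_0$ (for which the lower-semicontinuity observation above is essential) and the ability to convert the terminal $w^*$-local quasinilpotence of $T$ into a contradiction with membership in $U_0$ by pairing with $\xi_0$. Matching the topologies (AM-compact adjoints are $w^*$-to-$\rho_{\xi_0}$ continuous rather than $w^*$-to-norm) is exactly what Lemma~\ref{AM-continuity} is designed for, and this is what forces the choice of $U_0$ through the seminorm $\rho_{\xi_0}$ instead of $\norm{\cdot}$.
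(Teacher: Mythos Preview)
Your proposal is correct and follows essentially the same route as the paper's own proof: the same dichotomy on density of $H_\varepsilon(x)$, the same $w^*$-compact set $U_0$ defined via the norm ball and a $\rho_{\xi_0}$-constraint, the same use of Lemma~\ref{AM-continuity} to handle the AM-compact approximant, and the same iteration $y_{n+1}=x_0\wedge Ty_n$ leading to a contradiction with $w^*$-local quasinilpotence. The only differences are cosmetic normalizations (you set $x_0(\xi_0)=1$ where the paper sets $\norm{x_0}=\norm{\xi_0}=1$ and keeps $x_0(\xi_0)$ explicit) and your more explicit justification of the $w^*$-compactness of $U_0$ via lower semicontinuity of $\rho_{\xi_0}(\cdot-x_0)$.
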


\begin{proof}
  The proof of the theorem is similar to that of
  Theorem~\ref{pre-extension}. We may assume that $\norm{x_0}=1$,
  $\norm{\xi_0}=1$, and $\C$ is closed under
  taking positive multiples. Put
  $\rho_{\xi_0}(x)=\abs{x}(\xi_0)$. Evidently 
  $\rho_{\xi_0}(x)\le\norm{x}$ for all $x\in X$.
  It is also clear that $\abs{x}\le\abs{y}$ implies
  $\rho_{\xi_0}(x)\le\rho_{\xi_0}(y)$.

  Fix $0<\varepsilon<\frac{x_0(\xi_0)}{8}$. Define
  \begin{eqnarray*}
    \Cepsilon &=& \set{T\in\C\colon\theta(T_*)<\varepsilon}\mbox{ and}\\
    G_\varepsilon(x) &=& \set{z\in X\colon\abs{z}\le Tx\mbox{ for some }
      T\in\Cepsilon},\quad x\in X_+.
  \end{eqnarray*}
  Suppose that $G_\varepsilon(x)$ is not dense in $X$ for some $x\in
  X_+$. Analogously to the proof of Theorem~\ref{pre-extension}, we
  find a positive functional $h\in X^*_+$ such that $\langle
  h,Tx\rangle\le 1$ for all $T\in\Cepsilon$. Considering separately
  the cases $\theta(T_*)=0$ and $\theta(T_*)\ne 0$, we get the
  conclusion of the theorem.

  Thus, we may assume that $\overline{G_\varepsilon(x)}=X$ for all
  $x>0$. Define 
  \begin{displaymath}
  U_0=\set{x\in X_+\colon\norm{x}\le 1\mbox{ and
    }\rho_{\xi_0}(x-x_0)\le\frac{x_0(\xi_0)}{2}}.
  \end{displaymath}
  Clearly, $U_0$ is $w^*$-compact.

  Let $x\in U_0$ be arbitrary. Since $\overline{G_\varepsilon(x)}=X$,
  we can find $T_x\in\Cepsilon$ such that $\rho_{\xi_0}(x_0-x_0\wedge
  T_xx)\le\norm{x_0-x_0\wedge T_xx}<\varepsilon$.  Fix an operator
  $K_x$ adjoint to an AM-compact operator such that
  $\norm{T_x-K_x}<\varepsilon$.  By Lemma~\ref{AM-continuity}, we can
  find a relative (to $U_0$) $w^*$-open neighborhood $V_x\subseteq U_0$ 
  of $x$   such that $\rho_{\xi_0}(K_xx-K_xz)<\varepsilon$ for all 
  $z\in V_x$. Then for an arbitrary $z\in V_x$, we have
  \begin{multline*}
    \rho_{\xi_0}\bigl(x_0-x_0\wedge T_xz\bigr)
    \le\rho_{\xi_0}\bigl(x_0-x_0\wedge T_xx\bigr)
    +\rho_{\xi_0}\bigl(x_0\wedge T_xx-x_0\wedge K_xx\bigr)\\    
    +\rho_{\xi_0}\bigl(x_0\wedge K_xx-x_0\wedge K_xz\bigr)
    +\rho_{\xi_0}\bigl(x_0\wedge K_xz-x_0\wedge T_xz\bigr)\\
    <\varepsilon+\norm{T_xx-K_xx}
    +\rho_{\xi_0}(K_xx-K_xz)+\norm{K_xz-T_xz}\\
    <\varepsilon+\norm{T_x-K_x}\cdot\norm{x}+\varepsilon
    +\norm{T_x-K_x}\cdot\norm{z}<4\varepsilon<\frac{x_0(\xi_0)}{2}.
  \end{multline*}
  Take $x_1,\dots,x_m$ in $U_0$ such that
  $\bigcup\limits_{k=1}^mV_{x_k}=U_0$. Then
  $T=T_{x_1}+\dots+T_{x_k}\in\C$ satisfies $\rho_{\xi_0}\bigl(x_0-
  x_0\wedge Tz\bigr)\le\frac{x_0(\xi_0)}{2}$ for all $z\in U_0$.  
  Since   $\norm{x_0\wedge Tz}\le\norm{x_0}=1$, we have 
  $x_0\wedge Tz\in U_0$ for all $z\in U_0$.

  Put $z_0=x_0$ and $z_{n+1}=x_0\wedge Tz_n$. By the $w^*$-local
  quasinilpotence of $T$ at $(x_0,\xi_0)$ we have
  $\rho_{\xi_0}(z_n)\le\rho_{\xi_0}(T^nx_0)=\bigabs{T^nx_0(\xi_0)}\to 0$ 
  as $n\to\infty$ which is impossible by the definition of~$U_0$.
\end{proof}

The following result is derived from Theorem~\ref{pre-strong-ext}
in the same way that Theorem~\ref{pre-extension} was deduced from
Theorem~\ref{extension}.

\begin{theorem}\label{strong-ext}
  Suppose that $X=Y^*$ for some Banach lattice $Y$, and $\C$ is a
  collection of positive adjoint operators on~$X$. If $\C$ is finitely
  quasinilpotent at some $x_0>0$ then there exist non-zero $x\in X_+$
  and $f\in X^*_+$ such that $\langle f,Tx\rangle\le\theta(T_*)$ for
  all $T\in\EC$.
\end{theorem}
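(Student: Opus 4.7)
The plan is to apply Theorem~\ref{pre-strong-ext} to the semigroup $\EC$, mirroring exactly how Theorem~\ref{extension} was deduced from Theorem~\ref{pre-extension}. First I would verify that $\EC$ meets the hypotheses of Theorem~\ref{pre-strong-ext}: by construction its elements are positive adjoint operators; it is additive by definition; and it is closed under multiplication by positive scalars, because $[\C\rangle$ is, and the defining inequality in $\DC$ scales accordingly. Additivity together with closure under positive scalar multiples gives convexity.

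Next I would produce the required pair $(x_0,\xi_0)\in X_+\times Y_+$ with $x_0(\xi_0)\neq 0$. Since $x_0>0$ in $X=Y^*$ is nonzero, there exists some $\xi\in Y$ with $x_0(\xi)>0$. Writing $\xi=\xi^+-\xi^-$ yields $x_0(\xi^+)\ge x_0(\xi)>0$, so I set $\xi_0:=\xi^+\in Y_+$.

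For the $w^*$-local quasinilpotence hypothesis, I would appeal to \cite[Lemma~10.4]{Abramovich:02}: finite quasinilpotence of $\C$ at $x_0$ transfers to $\DC$, and hence to the subsemigroup $\EC$. In particular each individual $T\in\EC$ is locally quasinilpotent at $x_0$, and then
\[
\bigabs{T^n x_0(\xi_0)}^{1/n}\le\bigl(\norm{T^n x_0}\,\norm{\xi_0}\bigr)^{1/n}\longrightarrow 0,
\]
so every $T\in\EC$ is $w^*$-locally quasinilpotent at $(x_0,\xi_0)$. Applying Theorem~\ref{pre-strong-ext} to the convex family $\EC$ then produces nonzero $x\in X_+$ and $f\in X^*_+$ with $\langle f,Tx\rangle\le\theta(T_*)$ for all $T\in\EC$, which is exactly the desired conclusion.

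Because the transfer mechanism was already used to derive Theorem~\ref{extension} from Theorem~\ref{pre-extension}, I expect no substantive obstacle here; the only delicate points are the stability of $\EC$ under positive scalar multiples (needed for convexity) and the propagation of finite quasinilpotence through $\DC$ to $\EC$, both of which are essentially bookkeeping once one keeps track of what the adjointness constraint does and does not break.
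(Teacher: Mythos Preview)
Your proposal is correct and follows exactly the route the paper takes: apply Theorem~\ref{pre-strong-ext} to $\EC$, using that $\EC$ is convex and that finite quasinilpotence of $\C$ at $x_0$ passes to $\DC$ (hence $\EC$) via \cite[Lemma~10.4]{Abramovich:02}. The paper compresses this into a single remark that the derivation mirrors that of Theorem~\ref{extension} from Theorem~\ref{pre-extension}; your explicit choice of $\xi_0=\xi^+$ and the observation that local quasinilpotence at $x_0$ implies $w^*$-local quasinilpotence at $(x_0,\xi_0)$ are the natural details to supply and present no difficulty.
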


As every operator on $\ell_p$ $(1\le p<\infty)$ is AM-compact, 
this theorem can be used as an alternative proof of the following 
(certainly known) result.

\begin{corollary}\label{ellp}
  Every collection of positive operators on $\ell_p$, $1< p<\infty$,
  which is finitely quasinilpotent at a non-zero positive vector, has a
  non-trivial closed common invariant ideal.
\end{corollary}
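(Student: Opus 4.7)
The plan is to invoke Theorem~\ref{strong-ext} with $Y=\ell_q$, where $1/p+1/q=1$, and to exploit the fact that every bounded operator on $\ell_q$ is AM-compact. Reflexivity of $\ell_p$ identifies $X=\ell_p$ with $Y^*$, so every $T\in L(X)$ is automatically the adjoint of a unique $T_*\in L(Y)$; moreover $Y=\ell_q$ is itself a Banach lattice. For the AM-compactness, each order interval $[-\xi,\xi]\subseteq\ell_q$ is norm compact (truncate to finitely many coordinates and use $\xi\in\ell_q$), so its continuous image is relatively compact; hence $\theta(T_*)=0$ for every $T\in L(X)$.

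With these reductions in place, Theorem~\ref{strong-ext} applies directly to $\C$ and produces non-zero $x\in X_+$ and $f\in X^*_+$ with $\langle f,Tx\rangle\le\theta(T_*)=0$ for every $T\in\EC$. Letting $\mathcal S$ denote the multiplicative semigroup generated by $\C$, one has $\mathcal S\subseteq\EC$, and consequently $\langle f,Sx\rangle=0$ for every $S\in\mathcal S$.

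I would then form the invariant ideal as $J=\overline{I_0}$, where $I_0$ is the (un-closed) order ideal of $X$ generated by the orbit $\mathcal Sx$. Since $\C\cdot\mathcal S\subseteq\mathcal S$ and each $T\in\C$ is positive and continuous, $J$ is a closed $\C$-invariant order ideal. A one-line bound using $f\ge 0$ and $f(Sx)=0$ shows every $y\in I_0$ satisfies $\bigabs{f(y)}\le f(\abs{y})=0$, so $J\subseteq\ker f$ is proper.

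The only remaining concern --- and the main (mild) obstacle --- is that $J$ might a priori be $\{0\}$, i.e.\ the orbit $\mathcal Sx$ could vanish entirely. In that degenerate case $Tx=0$ for every $T\in\C$, and the closed principal ideal generated by $x$ is non-zero, is annihilated by $\C$ (hence $\C$-invariant), and is proper because $\ell_p$ has no strong order unit. Should even that ideal exhaust $X$ --- which forces $x$ to be a weak unit and, by positivity, $\C=\{0\}$ --- then any coordinate hyperplane supplies a non-trivial closed $\C$-invariant ideal.
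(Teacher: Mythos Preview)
Your proposal is correct and follows exactly the route the paper intends: apply Theorem~\ref{strong-ext} with $Y=\ell_q$, use that order intervals in $\ell_q$ are norm compact so $\theta(T_*)=0$ for every $T$, and then pass from $\langle f,Tx\rangle=0$ for all $T\in\EC$ to an invariant closed ideal via the semigroup observation the paper records just before Theorem~\ref{drnovsek-modify}. One small wrinkle: the clause ``proper because $\ell_p$ has no strong order unit'' is not a valid justification (the \emph{closed} ideal generated by a weak unit such as $x=(1/n)$ is all of $\ell_p$), but your very next sentence already handles that case correctly, so the argument stands.
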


Of course, Corollary~\ref{ellp} follows easily from
Theorem~\ref{drnovsek-modify} when $1\le p<\infty$.

\begin{corollary}
  Every collection of positive adjoint operators on $\ell_\infty$
  which is finitely quasinilpotent at a non-zero positive vector has a
  non-trivial closed common invariant ideal.
\end{corollary}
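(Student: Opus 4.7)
The plan is to apply Theorem~\ref{strong-ext} with $Y=\ell_1$, so that $X=Y^*=\ell_\infty$; both $X$ and $Y$ are Banach lattices. The crucial preliminary observation is that every bounded operator on $\ell_1$ is AM-compact. Indeed, given $b\in\ell_1^+$, the order interval $[0,b]$ is norm-compact: any sequence in $[0,b]$ has a coordinatewise convergent subsequence $x^{(n_k)}\to x$ by a diagonal argument, and since $\abs{x_i^{(n_k)}-x_i}\le 2b_i$ with $b$ summable, the dominated convergence theorem upgrades this to $\ell_1$-norm convergence. Every bounded operator on $\ell_1$ therefore sends order intervals to compact sets, so $\theta(T_*)=0$ for every adjoint operator $T$ on $\ell_\infty$.

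Applying Theorem~\ref{strong-ext} to $\C$ then yields non-zero $x\in X_+$ and $f\in X^*_+$ with $\langle f,Tx\rangle\le\theta(T_*)=0$, and hence $\langle f,Tx\rangle=0$, for every $T\in\EC$. Recall that $\EC$ is an additive and multiplicative semigroup containing $\C$, so any $\EC$-invariant closed ideal is automatically $\C$-invariant; the task reduces to producing such an ideal from the pair $(x,f)$.

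For this I would set
$$
J=\bigl\{y\in X\colon f(S\abs{y})=0\text{ for every }S\in\EC\cup\{I\}\bigr\}.
$$
Standard checks identify $J$ as a closed order ideal: it is solid because $\abs{z}\le\abs{y}$ and $S\ge 0$ force $S\abs{z}\le S\abs{y}$, and it is closed because the lattice operations and the maps $S$, $f$ are all continuous. The semigroup property of $\EC$ yields $\EC$-invariance: for $y\in J$ and $T\in\EC$, $\abs{Ty}\le T\abs{y}$ and $ST\in\EC$ for every $S\in\EC\cup\{I\}$, so $f(S\abs{Ty})\le f(ST\abs{y})=0$. Moreover $J$ is proper, since any $y_0$ with $f(y_0)\ne 0$ satisfies $f(\abs{y_0})\ge\abs{f(y_0)}>0$ and so $y_0\notin J$. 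Finally, every $Tx$ with $T\in\EC$ lies in $J$: by the semigroup property, $f(S\abs{Tx})=f((ST)x)=0$.

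I expect the main obstacle to be verifying non-triviality of $J$ in the degenerate case where $Tx=0$ for every $T\in\EC$, since then the previous sentence yields only $0\in J$. In that situation $\C$ annihilates the principal order ideal $\{y\colon\abs{y}\le nx\text{ for some }n\in\mathbb N\}$, so its norm closure is automatically a closed $\C$-invariant subspace; it is either proper, in which case it is the desired ideal, or equal to all of $\ell_\infty$, in which case every operator in $\C$ vanishes identically and any proper closed ideal of $\ell_\infty$ (for instance $c_0$) serves.
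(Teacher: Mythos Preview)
Your proof is correct and follows essentially the same route the paper intends: apply Theorem~\ref{strong-ext} with $Y=\ell_1$, use that order intervals in $\ell_1$ are norm compact so every bounded operator on $\ell_1$ is AM-compact (the paper states this just before Corollary~\ref{ellp}), and then pass from $\langle f,Tx\rangle=0$ for all $T\in\EC$ to an invariant closed ideal using that $\EC$ is a semigroup (the paper records this converse as a known fact right after Theorem~\ref{drnovsek-modify}). You simply spell out the last step explicitly, including the degenerate case $Tx=0$ for all $T\in\EC$; the paper's own case analysis in the final Remark (with $\mathcal J_2$ and $\mathcal J_3$) is the same idea in a more general setting.
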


The following example shows that the assumptions in
Theorems~\ref{extension} and~\ref{strong-ext} in general do not guarantee
the existence of an invariant subspace.

\begin{example}\label{example}
  {\it There is a collection $\C$ of operators which satisfies all the
  conditions of Theorem~\ref{strong-ext} and has no common non-trivial
  invariant subspaces.} Namely, in~\cite{Drnovsek:02}, the authors 
  constructed a multiplicative semigroup $\mathcal S_p$ of positive 
  square-zero operators acting on $L_p[0,1]$, $1\le p<\infty$, having 
  no common non-trivial invariant subspaces. It is not difficult to show 
  that $\mathcal S_p$ is in fact finitely quasinilpotent at every positive 
  vector. Hence for $1<p<\infty$, $\C=\mathcal S_p$ satisfies the 
  conditions of Theorem~\ref{strong-ext}.
\end{example}

\begin{remark}
  Even though Theorem~\ref{comp-alg} is not a special case of
  Theorem~\ref{lom-dual-2}, in the case of an algebra of adjoint
  operators the former can be easily deduced from the latter,
  see~\cite[Corollary 1]{Lomonosov:91}. Similarly, we will show that
  in case of adjoint operators, Theorem~\ref{drnovsek-modify} can be
  deduced from Theorem~\ref{strong-ext}. Indeed, suppose that $X=Y^*$
  for some Banach lattice~$Y$, and $\C$ is a collection of positive
  adjoint operators which is finitely quasinilpotent at some $x_0>0$
  and some operator in it dominates a non-zero AM-compact
  positive\footnote{Unlike in Theorem~\ref{drnovsek-modify}, we also
    require that $K\ge 0$ here.} adjoint operator~$K$.  We will show
  that there is a non-trivial closed ideal which is invariant under
  $\C$ and under all adjoint operators in~$[\C\rangle$.

  Clearly, $K\in\EC$. Let $x$ and $f$ be as in Theorem~\ref{strong-ext}.
  \begin{eqnarray*}
    \mathcal J_1 &=&
      \bigl\{z\in X\mid\abs{z}\le T_1KT_2x\mbox{ for some }T_1,T_2\in\EC\bigr\},\\
    \mathcal J_2 &=&
      \bigl\{z\in X\mid T\abs{z}=0\mbox{ for all }T\in\EC\bigr\},\mbox{ and}\\
    \mathcal J_3 &=&
      \bigl\{z\in X\mid\abs{z}\le Tx\mbox{ for some }T\in\EC\bigr\}.
  \end{eqnarray*}
  It is easy to see that $\mathcal J_1$, $\mathcal J_2$, and $\mathcal
  J_3$ are ideals in~$X$, invariant under $\C$ and under all adjoint
  operators in~$[\C\rangle$. It is left to show that at least one of
  the three must be non-trivial. Clearly, $\mathcal J_2$ is closed and
  $\mathcal J_2\ne X$. Suppose that $\mathcal J_2=\{0\}$. In
  particular, $x\notin J_2$. It follows that $J_3\ne\{0\}$. Suppose
  that $\mathcal J_3$ is dense in~$X$. It follows from
  Theorem~\ref{strong-ext} that $\mathcal J_1\subseteq\ker f$; hence
  $\overline{\mathcal J_1}$ is proper. Assume that $\mathcal
  J_1=\{0\}$. Hence, $T_1KT_2x=0$ for all $T_1,T_2\in\EC$. Since
  $\mathcal J_2=\{0\}$, it follows that $K$ vanishes on $\EC x$ and,
  therefore, on~$\mathcal J_3$.  Since $\mathcal J_3$ is dense in $X$
  it follows that $K=0$; a contradiction.
\end{remark}

\medskip

\textbf{Acknowledgment.} We would like to thank Victor Lomonosov for
helpful discussions.

\end{document}